\newtheorem{theorem}{Theorem}[section]
\newtheorem{lemma}[theorem]{Lemma}
\theoremstyle{definition}
\theoremstyle{remark}
\newtheorem{remark}{Remark}
\newtheorem{example}{Example}
\numberwithin{equation}{section}
\newcommand{\real}{\mathbb R}
\def\natu{\mathbb N}
\def\dis{\displaystyle}
\def\sobol{H^{1}(0,T)}
\def\sobopern{(H^1_T (0,T))^n}
\def\intl{\int_{0}^{T}}
\def\periodica{L_{T}(\real,\real)}
\def\betaper{\beta_p ^{per}}
\def\betaant{\beta_p ^{ant}}
\def\afirm#1#2#3{\skipaline\noindent{\bf #1} \hfill
\begin{tabular}{p{#2}}{\sl #3} \end{tabular}\hfill $ $\skipaline}
\def\skipaline{\vskip12pt plus 1pt}
\begin{document}

\title[Periodic conservative systems]
{Stability, Resonance and Lyapunov Inequalities for Periodic
Conservative Systems}
%{Lyapunov inequalities for periodic conservative systems and applications}%
\author{Antonio Ca\~{n}ada}
\author{Salvador Villegas}
\thanks{The authors have been supported by the Ministry of Education and
Science of Spain (MTM2008.00988)}
\address{Departamento de An\'{a}lisis
Matem\'{a}tico, Universidad de Granada, 18071 Granada, Spain.}
\email{acanada@ugr.es, svillega@ugr.es}

\subjclass[2000]{34B05, 34B15, 34C10, 34C15}%
\keywords{Lyapunov inequalities, periodic boundary value problems,
resonance, stability, conservative systems.}%

%\date{}%
%\dedicatory{}%
%\commby{}%
% ----------------------------------------------------------------
\begin{abstract}
This paper is devoted to the study of Lyapunov type inequalities
for periodic conservative systems. The main results are derived
from a previous analysis which relates  the best Lyapunov
constants to some especial (constrained or unconstrained)
minimization problems. We provide some new results on the
existence and uniqueness of solutions of nonlinear resonant and
periodic systems. Finally, we present some new conditions which
guarantee the stable boundedness of linear periodic conservative
systems.
\end{abstract}

\maketitle

\section{Introduction}

\noindent The classical Lyapunov criterion on the stability of
Hill's equation
\begin{equation}\label{hill}
u''(t) + q(t)u(t) = 0,\ t \in \real,
\end{equation}
with $q(\cdot)$ a $T-$periodic function, says that if
\begin{equation}\label{hill2}
q \in L^1 (0,T), \ \intl q(t) \ dt > 0, \ \intl q^+ (t) \ dt \leq
\frac{4}{T},
\end{equation}
then (\ref{hill}) is stable (in the sense of Lyapunov, i.e. any
solution $u(\cdot)$ of (\ref{hill}) satisfies $\sup_{t \in \real}\
( \vert u(t) \vert + \vert \dot{u}(t) \vert ) < \infty$)
(\cite{magnus}). Here $q^+ (t) = \max \{ q(t),0 \}$ denotes the
positive part of the function $q.$

Condition (\ref{hill2}) has been generalized in several ways. In
particular, in \cite{zhangli}, the authors provide optimal
stability criteria by using $L^{p}$ norms of $q^+,$ $1 \leq p \leq
\infty.$ In the proof, one of the main ideas is a useful relation
between the eigenvalues of (\ref{hill}) associated to periodic and
antiperiodic boundary conditions and those associated to Dirichlet
boundary conditions (see Theorem 4.3 in \cite{zhanglondon}).

Despite its undoubted interest, there are not many studies on the
stability properties for systems of equations
\begin{equation}\label{2ieq35}
u''(t) + Q(t)u(t)= 0,\ t \in \real,
\end{equation}
where the matrix function $Q(\cdot)$ is $T-$periodic. A notable
contribution was provided by Krein  in \cite{krein}. In this work,
the author assumes that $Q(\cdot) \in \Lambda,$ where $\Lambda$ is
defined as

\afirm{[$\Lambda$]}{10cm}{The set of real $n\times n$ symmetric
matrix valued function $Q(\cdot)$, with continuous and
$T-$periodic element functions $q_{ij}(t), \ 1 \leq i,j \leq n,$
such that (\ref{2ieq35}) has not nontrivial constant solutions and
$$ \intl \langle Q(t)k,k \rangle  \ dt \geq 0, \
\forall \ k \in \real^n. $$ }

\noindent Krein proved that in this case, all solutions of the
system (\ref{2ieq35}) are stably bounded (see Section 4 for the
precise definition of this property) if $\lambda_1
> 1,$ where $\lambda_1$ is the smallest positive eigenvalue of the
eigenvalue problem
\begin{equation}\label{2eq37}
u''(t) + \mu Q(t)u(t)= 0, \ t \in \real, \ u(0)+u(T) = u'(0)+u'(T)
= 0.
\end{equation}
The Lyapunov conditions (\ref{hill2}) and those given in
\cite{zhangli} for scalar equations, imply $\lambda_1 > 1,$ but
for systems of equations, and assuming $Q(\cdot) \in \Lambda,$ it
is not easy to give sufficient conditions to ensure the property
$\lambda_1 > 1.$ In fact, to the best of our knowledge, we do not
know a similar result to Theorem 4.3 in \cite{zhanglondon}, for
the case of systems of equations.

In \cite{clhi} the authors establish sufficient conditions for
having $\lambda_1
> 1$ which involve $L^1$ restrictions on the spectral radius of
some appropriate matrices which are calculated by using the matrix
$Q(t).$ It is easy to check that, even in the scalar case, these
conditions are independent from classical $L^1-$ Lyapunov
criterion (\ref{hill2}).

In Section 4 we present some new conditions which allow to prove
that $\lambda_1 > 1.$ These conditions are given in terms of the
$L^p$ norm of appropriate functions $b_{ii}(t), \ 1 \leq i \leq
n,$ related to (\ref{2ieq35}) through the inequality $Q(t) \leq
B(t), \ \forall\ t \in [0,T],$ where $B(t)$ is a diagonal matrix
with entries given by $b_{ii}(t), \ 1 \leq i \leq n.$ These
sufficient conditions are optimal in the sense explained in Remark
\ref{remark3} below. Here, the relation $C \leq D$ between $n
\times n$ symmetric matrices means that $D-C$ is positive
semi-definite.

Our main result in Section 4 is derived from a fundamental
relation between the best $L^p$ Lyapunov constant and the minimum
of some especial constrained minimization problems. This relation
is proved in Section 2. Think that the conditions (\ref{hill2})
are resonant conditions, in the sense that the real number zero is
the first eigenvalue of the periodic eigenvalue problem
\begin{equation}\label{1805101}
u''(t) + \mu u(t) = 0, \ u(0)-u(T) = u'(0)-u'(T) = 0
\end{equation}
so that, these constrained minimization problems arises in a
natural way. For other boundary conditions such as Dirichlet or
antiperiodic boundary ones, the minimization problems associated
to best Lyapunov constants are unconstrained minimization problems
(see \cite{talenti}, \cite{zhang}). Motivated by a completely
different problem (an isoperimetric inequality known as Wulff
theorem, of interest in crystallography), the authors studied in
\cite{dacoro1} (see also \cite{dacoro2}) similar variational
problems but, in our opinion, the relation between these
minimization problems and $L^p$ Lyapunov constants for periodic
boundary conditions, is established for the first time in Section
2 of the present paper (see \cite{camovimia}, for the case of
Neumann boundary conditions).

Another important application of Lyapunov inequalities is the
study of nonlinear resonant problems. If $G: \real^n \rightarrow
\real$ is a $C^2-$mapping and $A$ and $B$ are real symmetric $n
\times n$ matrices with respective eigenvalues $a_1 \leq \ldots
\leq a_n$ and $b_1 \leq \ldots \leq b_n$ satisfying
\begin{equation}\label{eq21}
\begin{array}{c}
A \leq G''(u) \leq B, \  \forall \ u \in \real^n,
\\ \\ 0 < a_i \leq b_i < \frac{4\pi^2}{T^2}, \ 1 \leq i \leq n,
\end{array}
\end{equation}
then, for each continuous and $T-$periodic function $h: \real
\rightarrow \real^n$, the periodic problem
\begin{equation}\label{eq20}
u''(t) + G'(u(t)) = h(t), \ t \in \ (0,T), \ u(0)-u(T) =
u'(0)-u'(T) = 0,
\end{equation}
has a unique solution (see \cite{ahmad}, \cite{brown} and
\cite{lazer}). This last result is also true by using more general
restrictions than (\ref{eq21}) which involves higher eigenvalues
of (\ref{1805101}) (think that $0$ and $\frac{4\pi^2}{T^2}$ are
the first two eigenvalues of the eigenvalue problem
(\ref{1805101})). The mentioned results only allow a weak
interaction between the nonlinear term $G'(u)$ and the spectrum of
the linear part (\ref{1805101}) in the following sense: by using
the variational characterization of the eigenvalues of a real
symmetric matrix, it may be easily deduced that (\ref{eq21}) imply
that the eigenvalues $g_1 (u) \leq \cdots \leq g_n (u)$ of the
matrix $G''(u),$ satisfy
\begin{equation}\label{eq22}
0 < a_i \leq g_i (u) \leq b_i < \frac{4\pi^2}{T^2},  \ \forall \ u
\in \real^n, \ 1 \leq i \leq n,
\end{equation}
and consequently, (\ref{eq21}), which is  a $L^\infty$
restriction, may be seen as a nonresonant hypothesis. In Section 3
we provide for each $p,$ with $1 \leq p \leq \infty,$ $L^p$
restrictions for boundary value problem (\ref{eq20}) to have a
unique solution. These are optimal in the sense shown in Remark
\ref{r2} below. They are given in terms of the $L^p$ norm of
appropriate functions $b_{ii}(t), \ 1 \leq i \leq n,$ related to
(\ref{eq20}) through the inequality $A(t) \leq G''(u) \leq B(t), \
\forall\ t \in [0,T],$ where $B(t)$ is a diagonal matrix with
entries given by $b_{ii}(t), \ 1 \leq i \leq n$ and $A(t)$ is a
convenient symmetric matrix which belongs to $\Lambda  $ and
consequently, this avoids the resonance at the eigenvalue $0.$
Since our conditions are given in terms of $L^p$ norms, we allow
to  the functions $g_i (u)$ to cross an arbitrary number of
eigenvalues as long as certain $L^p$ norms are controlled.

\section{Preliminary results on scalar Lyapunov inequalities and minimization problems}

\noindent This section will be concerned with some preliminary
results on Lyapunov inequalities for the periodic boundary value
problem
\begin{equation}\label{eq1}
u''(t) + a(t)u(t) = 0, \ t \in (0,T), \ u(0)-u(T) = u'(0) - u'(T)
= 0,
\end{equation}
and the antiperiodic boundary value problem
\begin{equation}\label{eq2}
u''(t) + a(t)u(t) = 0, \ t \in (0,T), \ u(0)+u(T) = u'(0) + u'(T)
= 0,
\end{equation}
where, from now on, we assume that $a \in L_{T} (\real,\real),$
the set of $T$-periodic functions $a: \real \rightarrow \real$
such that $a|_{[0,T]} \in L^1 (0,T).$

If we define the sets

\begin{equation}\label{eq3}
\Lambda^{per} = \{ a \in \periodica\setminus \{0\}: \dis \intl
a(t) \ dt \geq 0 \ \mbox{and} \ (\ref{eq1}) \ \mbox{has nontrivial
solutions} \ \}
\end{equation}
\begin{equation}\label{eq4}
\Lambda^{ant} = \{ a \in \periodica : \ (\ref{eq2}) \ \mbox{has
nontrivial solutions} \ \},
\end{equation}
let us observe that the positive eigenvalues of the eigenvalue
problem
\begin{equation}\label{eq7}
 u''(t) + \lambda u(t) = 0, \ t \in (0,T), \ u(0)-u(T) = u'(0)- u'(T) = 0, \end{equation}%
and the eigenvalues of the eigenvalue problem
\begin{equation}\label{eq8}
 u''(t) + \lambda u(t) = 0, \ t \in (0,T), \ u(0)+u(T) = u'(0)+ u'(T) = 0, \end{equation}%
belong, respectively,  to $\Lambda^{per}$ and $\Lambda^{ant}$.
Therefore, for each $p$ with $1 \leq p \leq \infty,$ we can
define, respectively, the $L^p$-Lyapunov constants $\beta_p
^{per}$ and $\beta_p ^{ant}$ for the periodic and the antiperiodic
problem, as the real numbers
\begin{equation}\label{eq5}
\begin{array}{c}
\beta_{p}^{per} \equiv \displaystyle \inf_{a \in
\Lambda^{per}\bigcap L^p (0,T)} \ \ \Vert a^+ \Vert_p, \ \
\beta_{p}^{ant} \equiv \displaystyle \inf_{a \in
\Lambda^{ant}\bigcap L^p (0,T)} \ \ \Vert a^+ \Vert_p,
\end{array}
\end{equation}
where
\begin{equation}\label{eq6}
\begin{array}{c}
\Vert a^+ \Vert_{p} = \left ( \displaystyle \int_{0}^{T} \vert a^+
(t)\vert ^{p} \ dt \right ) ^{1/p}, 1 \leq p < \infty; \ \ \Vert
a^+ \Vert_{\infty} =  sup \ ess \ \ a^+.
\end{array}
\end{equation}
An explicit expression for the constants $\betaper$ and $\betaant
$, as a function of $p$ and $T,$ has been obtained in \cite{zhang}
(see also \cite{camovimia}, \cite{cavidcds} and \cite{talenti} for
the case of Neumann, mixed and Dirichlet boundary conditions,
respectively).

A key point to extend the mentioned previous results on scalar
problems to systems of equations, is the characterization of the
$L^p$-Lyapunov constant as a minimum of a convenient minimization
scalar problem, where only some appropriate subsets of the space
$\sobol$ are used (here $\sobol$ is the usual Sobolev space). For
the Dirichlet problem this was done by Talenti (\cite{talenti})
and for the Neumann problem this was done by the authors in
\cite{camovimia}. In the next two lemmas, we treat, respectively,
with the periodic and the antiperiodic problem. In the proof, only
those innovative details are shown.

\begin{lemma}\label{l1}
If $1 \leq p \leq \infty$ is a given number, let us define the
sets $X_p ^{per}$ and the functionals $I_p ^{per}:X_p
^{per}\setminus \{ 0 \} \rightarrow \real $ as
\begin{equation}\label{eq10}
\begin{array}{c}
X_{1}^{per} = \{ v \in \sobol : v(0)- v(T) = 0, \displaystyle
\max_{t \in [0,T]}
v(t) + \displaystyle \min_{t \in [0,T]} v(t) = 0 \}, \\ \\
X_{p}^{per} = \left \{ v \in \sobol: v(0)- v(T)= 0, \dis \intl
\vert v \vert ^{\frac{2}{p-1}} v = 0 \right \},\ \mbox{if} \ \ 1 <
p < \infty,\\ \\
X_{\infty}^{per}  = \{ v \in \sobol: \ v(0)- v(T) = 0, \ \dis
\intl v = 0 \}, \\ \\
I_1 ^{per}(v) = \displaystyle \frac{\dis \intl v'^{2}}{\Vert v
\Vert_{\infty}^{2}}, \ \ I_{p}^{per}(v) =  \dis \frac{\dis \intl
v'^{2}}{\left ( \dis \intl \vert v \vert ^{\frac{2p}{p-1}} \right
)^{\frac{p-1}{p}}}, \ \mbox{if} \ \ 1 < p < \infty, \ \
I_{\infty}^{per}(v) = \displaystyle \frac{\dis \intl
v'^{2}}{\displaystyle \intl v^{2}}
\end{array}
\end{equation}
Then, the $L_p$ Lyapunov constant $\beta_p ^{per}$ defined in
(\ref{eq5}), satisfies
\begin{equation}\label{eq11} \begin{array}{c}
\beta_{p}^{per} =   \dis \min_{X_p ^{per} \setminus \{ 0 \}} \
I_{p} ^{per}, \ \ 1 \leq p \leq \infty.
\end{array}
\end{equation}

\end{lemma}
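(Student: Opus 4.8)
The plan is to establish the two inequalities $\betaper \le \min_{X_p^{per}\setminus\{0\}} I_p^{per}$ and $\min_{X_p^{per}\setminus\{0\}} I_p^{per} \le \betaper$ separately. Throughout I write $q = \frac{2p}{p-1}$, so that the denominator of $I_p^{per}$ is $(\intl |v|^q)^{(p-1)/p}$ and the constraint defining $X_p^{per}$ reads $\intl |v|^{2/(p-1)} v = 0$; I would treat $1 < p < \infty$ in detail, the cases $p=1$ and $p=\infty$ following by the same scheme with the limiting reading of the constraint ($\max v + \min v = 0$, respectively $\intl v = 0$). The routine ingredients — coercivity of $I_p^{per}$ on $X_p^{per}\setminus\{0\}$ and existence of a minimizer by the direct method — are handled exactly as in the Dirichlet and Neumann situations of \cite{talenti}, \cite{camovimia}; here the constraint $\intl |v|^{2/(p-1)} v = 0$ plays the role of excluding the nonzero constants (which would otherwise drive the quotient to $0$) and restores the needed Poincar\'e-type bound.

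For the inequality $\betaper \le \min I_p^{per}$ I would take a minimizer $v_0$ (normalized by $\intl |v_0|^q = 1$) and write its Euler--Lagrange equation, introducing a multiplier $\lambda$ for the normalization and a multiplier $\mu$ for the constraint $\intl |v_0|^{2/(p-1)} v_0 = 0$. Computing the two variations gives $-v_0'' = \frac{\lambda p}{p-1}|v_0|^{2/(p-1)} v_0 + \frac{\mu(p+1)}{2(p-1)}|v_0|^{2/(p-1)}$, together with the natural boundary condition $v_0'(0) = v_0'(T)$ (the condition $v_0(0) = v_0(T)$ being imposed). The crucial point is to discard the inhomogeneous multiplier term: integrating this equation over $[0,T]$, the left-hand side vanishes by periodicity of $v_0'$, the first term on the right vanishes precisely because $v_0 \in X_p^{per}$, and since $\intl |v_0|^{2/(p-1)} > 0$ we conclude $\mu = 0$. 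Thus $v_0$ solves $v_0'' + a_0 v_0 = 0$ with periodic boundary conditions and $a_0 = \frac{\lambda p}{p-1}|v_0|^{2/(p-1)} \ge 0$; one checks $\lambda > 0$, $a_0 \not\equiv 0$ and $a_0 \in L^p$, so $a_0 \in \landaper$, and a short computation gives $\Vert a_0^+ \Vert_p = \min I_p^{per}$, whence $\betaper \le \min I_p^{per}$.

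For the reverse inequality I would start from an arbitrary $a \in \landaper \cap L^p$ with a nontrivial periodic solution $u$ of (\ref{eq1}). Integrating the equation once gives $\intl a u = 0$. The obstacle is that $u$ need not lie in $X_p^{per}$: although $u(0) = u(T)$, the constraint $\intl |u|^{2/(p-1)} u = 0$ generally fails. I would repair this by a constant shift: the map $c \mapsto \intl |u+c|^{2/(p-1)}(u+c)$ is strictly increasing and onto $\real$, so there is a unique $c^\ast$ with $v := u + c^\ast \in X_p^{per}$. Multiplying $v'' + a v = a c^\ast$ by $v$, integrating by parts, and using $\intl a v = c^\ast \intl a$ (which follows from $\intl a u = 0$) yields the identity $\intl v'^2 = \intl a v^2 - (c^\ast)^2 \intl a$. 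Here the resonance hypothesis $\intl a \ge 0$ built into $\landaper$ is decisive: it forces $\intl v'^2 \le \intl a v^2 \le \intl a^+ v^2$, and a single application of H\"older's inequality then gives $I_p^{per}(v) \le \Vert a^+ \Vert_p$. Since $v \in X_p^{per}\setminus\{0\}$, this shows $\min I_p^{per} \le \Vert a^+ \Vert_p$, and taking the infimum over $a$ completes the proof.

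I expect the two genuinely new steps to be the vanishing of the multiplier $\mu$ and the shift-and-sign argument of the previous paragraph; both hinge on the same mechanism — integrating over one period and exploiting the zero-mean-type constraint defining $X_p^{per}$ together with the sign condition $\intl a \ge 0$. The limiting cases $p = 1$ and $p = \infty$ require only cosmetic changes: for $p = \infty$ the constraint is the linear condition $\intl v = 0$ and H\"older is replaced by $\intl a^+ v^2 \le \Vert a^+ \Vert_\infty \intl v^2$, while for $p = 1$ one uses $\max v + \min v = 0$ and $\intl a^+ v^2 \le \Vert a^+ \Vert_1 \Vert v \Vert_\infty^2$, the shift $c^\ast$ being chosen accordingly.
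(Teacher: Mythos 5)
Your core argument for $1<p<\infty$ is correct, and it takes a genuinely different route from the paper. The paper does not prove the two inequalities separately: it derives the Euler equation exactly as you do (and kills the extra multiplier by the same mechanism, namely testing against the constant function $1$), but then it \emph{integrates} that equation, counts the zeros of $u_0$ and $u_0'$ to conclude $n\geq 2$, arrives at the closed formula (\ref{eq18}) for $m_p$, and finally quotes \cite{zhang} for the fact that this explicit number equals $\betaper$. You replace the explicit computation and the citation by two soft arguments: from a normalized minimizer $v_0$ you manufacture the potential $a_0=\frac{\lambda p}{p-1}\vert v_0\vert^{2/(p-1)}\in\landaper\cap L^p$ with $\Vert a_0^+\Vert_p=\min I_p^{per}$, giving $\betaper\leq \min I_p^{per}$; conversely, the shift $u\mapsto u+c^\ast$, the identity $\intl a u=0$, the sign condition $\intl a\geq 0$ and H\"older give $\min I_p^{per}\leq \Vert a^+\Vert_p$ for every $a\in\landaper\cap L^p$. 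This is self-contained (no zero-counting, no explicit constants) and is a real gain in transparency for $1<p<\infty$; the case $p=\infty$ also goes through as you say, since there the constraint is linear and the functional smooth.

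However, your claim that $p=1$ requires ``only cosmetic changes'' is wrong for one of the two inequalities, and this is a genuine gap. The shift direction ($\min I_1^{per}\leq \beta_1^{per}$, with $c^\ast=-\frac12(\max u+\min u)$ and $\intl a^+v^2\leq\Vert a^+\Vert_1\Vert v\Vert_\infty^2$) is fine. But the construction of a potential from a minimizer breaks down at $p=1$: the functional $v\mapsto\Vert v\Vert_\infty^2$ is not differentiable, so there is no Euler--Lagrange equation of the form you wrote; more fundamentally, no $a_0\in\landaper\cap L^1$ with $\Vert a_0^+\Vert_1=\min I_1^{per}$ can exist, because (as the paper shows in Remark \ref{2105101}) the minimizers of $I_1^{per}$ are precisely the translates/multiples of the sawtooth $w$ of (\ref{eq14}), whose associated ``potential'' is the measure $\frac{8}{T}(\delta_{T/4}+\delta_{3T/4})$; any nontrivial periodic solution of (\ref{eq1}) with an $L^1$ potential is $C^1$, which a sawtooth is not, so the infimum $\beta_1^{per}=16/T$ is not attained in $L^1$. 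Consequently $\beta_1^{per}\leq\min I_1^{per}$ needs a separate argument: either the paper's route --- quote $\beta_1^{per}=16/T$ from \cite{huai}, \cite{zhang} and prove directly that $I_1^{per}\geq 16/T$ on $X_1^{per}\setminus\{0\}$ via the translation-plus-four-interval Cauchy--Schwarz computation (\ref{050510}), with $w$ giving equality --- or an approximation of the two Dirac masses by thin $L^1$ bumps tuned so that the periodic problem keeps a nontrivial solution. Note also that in the paper even the $1<p<\infty$ case leans on the $p=1$ analysis (to justify the normalization $u_0(0)=u_0(T)=0$ and $n\geq2$), whereas in your scheme $p=1$ is not a limit of anything and must be handled on its own.
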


\begin{proof}

$ $

$ $

{\bf The case $p=1.$} It is very well known that $\beta_1 ^{per} =
\frac{16}{T}$ (\cite{huai},\cite{zhang}). Now, if $u \in X_1
^{per}\setminus \{ 0 \},$ then there exists $x_0 \in [0,T]$ such
that $u(x_0) = 0.$ Taking into account that $u$ can be extended as
a $T-$ periodic function to $\real,$ if we define the function
$v(x) = u(x+x_0), \ \forall \ x \in \real,$ then $v|_{[0,T]}  \in
X_1^{per} \setminus \{ 0 \},$ $v(0) = v(T) = 0$ and $I_1^{per}(u)
= I_1^{per}(v).$ In addition (if it is necessary, we can choose
$-v$ instead of $v$), there exist $0 < x_1 < x_2 < x_3 < T$ such
that
$$ v(x_1) = \max_{[0,T]} \ v, \ \ v(x_2) = 0, \ \ v(x_3) =
\min_{[0,T]} \ v.
$$
If $x_0 = 0, x_4 = T,$ it follows from the Cauchy-Schwarz
inequality
\begin{equation}\label{050510}
\begin{array}{c}
\intl v'^{2} = \dis \sum_{i=0}^{3} \ \int_{x_i}^{x_{i+1}} v'^{2}
\geq \dis \sum_{i=0}^{3} \dis \frac{\left ( \int_{x_i}^{x_{i+1}}
\vert v' \vert \right ) ^2 }{x_{i+1} -x_i} \geq \\ \\
\dis \sum_{i=0}^{3} \dis \frac{\left ( \int_{x_i}^{x_{i+1}}  v'
\right ) ^2 }{x_{i+1} -x_i}= \dis \sum_{i=0}^3 \dis
\frac{(v(x_{i+1}) - v(x_i))^2 }{x_{i+1}-x_i} = \\ \\ \Vert v
\Vert_{\infty} ^2 \dis \sum_{i=0}^3 \dis \frac{1}{x_{i+1}-x_i}
\geq \frac{16}{T} \Vert v \Vert_{\infty}^2
\end{array}
\end{equation}
Consequently
\begin{equation}\label{eq13}
I_{1}^{per} (u) = \dis \frac{\intl u'^{2}}{\Vert u
\Vert_{\infty}^2} = I_{1}^{per}(v) \geq \dis \frac{16}{T}, \
\forall \ u \in X_1 ^{per}\setminus \{ 0 \}.
\end{equation}
On the other hand, the function $w \in X_1 ^{per}\setminus \{ 0
\}$ defined as
\begin{equation}\label{eq14}
w(x) = \left \{
\begin{array}{l}
x, \ \mbox{if} \ 0 \leq x \leq T/4, \\
-(x-\frac{T}{2}), \ \mbox{if} \ T/4 \leq x \leq 3T/4, \\
(x-T), \ \mbox{if} \ 3T/4 \leq x \leq T, \\
\end{array}
\right.
\end{equation}
satisfies
$$
\dis \frac{\intl w'^{2}}{\Vert w \Vert_{\infty}^2} = \dis
\frac{16}{T}
$$
Consequently, the case $p=1$ is proved.

{\bf The case $p=\infty.$} It is very well known that
$\beta_\infty ^{per} = \frac{4\pi^2}{T^2}$, the first positive
eigenvalue of the eigenvalue problem (\ref{eq7}) (see
\cite{zhang}). From its variational characterization, we obtain
$$
\beta_{\infty}^{per} = \min_{X_\infty ^{per} \setminus \{ 0 \}} \
I_{\infty} ^{per}.
$$

{\bf The case $1 < p < \infty$.} Let us denote
$$m_{p} = \inf_{X_p ^{per}\setminus \{ 0 \}}\ I_p ^{per}.$$ If $\{ u_{n} \} \subset
X_{p}^{per} \setminus \{ 0 \}$ is a minimizing sequence, since the
sequence $\{ k_{n}u_{n} \}, \ k_{n} \neq 0, $ is also a minimizing
sequence, we can assume without loos of generality that $\dis
\intl \vert u_{n} \vert ^{\frac{2p}{p-1}} = 1.$ Then $\left \{
\dis \intl \vert u_{n}'^{2} \vert \right \} $ is also bounded.
Moreover, for each $u_{n}$ there is $x_{n} \in (0,T)$ such that
$u_{n}(x_{n}) = 0.$ Therefore, $\{ u_{n} \}$ is bounded in
$\sobol.$ So, we can suppose, up to a subsequence, that $u_{n}
\rightharpoonup u_{0}$ in $\sobol$ and $u_{n} \rightarrow u_{0}$
in $C[0,L]$ (with the uniform norm). The strong convergence in
$C[0,L]$ gives us $u_0 (0)-u_0 (T) = 0$ and $\dis \intl \vert
u_{0} \vert ^{\frac{2p}{p-1}} = 1$. Therefore, $u_{0} \in
X_{p}^{per} \setminus \{ 0 \}.$ The weak convergence in $\sobol$
implies $I^{per}_{p}(u_{0}) \leq \liminf \ I^{per}_{p}(u_{n}) =
m_{p}.$ Then $u_{0}$ is a minimizer of $I^{per}_p$ in $X^{per}_p
\setminus \{ 0 \}.$
\newline
Since $X^{per}_{p} = \{ \ u \in \sobol: u(0)-u(t) = 0, \ \varphi
(u) = 0 \},$ $\varphi (u) = \dis \intl \vert u \vert
^{\frac{2}{p-1}} u ,$ if $u_{0} \in X^{per}_{p} \setminus \{ 0 \}$
is any minimizer of $I^{per}_{p}$, Lagrange multiplier Theorem
implies that there is $\lambda \in \real$ such that
$$
H'(u_{0})(v) + \lambda \varphi '(u_{0})(v) = 0, \ \forall v \in
\sobol \ \mbox{such that} \ v(0)-v(T) = 0.
$$
Here $H: \sobol \rightarrow \real$ is defined by
$$
H(u) = \dis \intl u'^{2} - m_{p} \left ( \dis \intl \vert u \vert
^{\frac{2p}{p-1}} \right )^{\frac{p-1}{p}}
$$
Also, since $u_{0} \in X_{p}^{per}$ we have $H'(u_{0})(1) = 0.$
Moreover $H'(u_{0})(v) = 0, \ \forall \ v \in \sobol : \ v(0)
-v(t) = 0, \ \varphi'(u_{0})(v) = 0.$ Finally, as any $v \in
\sobol$ satisfying $v(0)-v(T) = 0,$ may be written in the form $v
= \alpha + z, \ \alpha \in \real,$ and $z\in \sobol$ satisfying
$z(0)-z(T) = 0, \ \varphi'(u_{0})(z) = 0,$ we conclude
$H'(u_{0})(v) = 0, \ \forall \ v \in \sobol,$  such that
$v(0)-v(T)= 0.$ This implies that $u_0$ satisfies the problem
\begin{equation}\label{eq15}
\begin{array}{c}
u_0''(x) + A_{p}(u_0)\vert u_0 (x)\vert^{\frac{2}{p-1}} u_0 (x) =
0, \ x \in (0,T), \\  u_0 (0)-u_0 (T) = 0, \ u_0 '(0)-u_0 ' (T) =
0,
\end{array}
\end{equation}
where
\begin{equation}\label{eq16}
A_{p}(u_0) = m_{p} \left ( \dis \intl \vert u_0\vert
^{\frac{2p}{p-1}} \right )^{\frac{-1}{p}}
\end{equation}

If one has an exact knowledge about the number and distribution of
the zeros of the functions $u_0$ and $u_0 ',$ the Euler equation
(\ref{eq15}) can be integrated (see \cite{camovimia}, Lemma 2.7).
In our case, it is not restrictive to assume $u_0 (0) = u_0 (T) =
0$ (see the previous case $p=1$). Then, if we denote the zeros of
$u_0$ in $[0,T]$ by $0 = x_0 < x_2 < \ldots < x_{2n} = T$ and the
zeros of $u_0 '$ in $(0,T)$ by $ x_1 < x_3 < \ldots < x_{2n-1}$,
we obtain
\begin{equation}\label{eq17}
m_{p} = \dis \frac{4n^{2}I^{2}p}{T^{2-\frac{1}{p}}
(p-1)^{1-\frac{1}{p}} (2p-1)^{1/p}}, \end{equation} where $ \ I =
\dis \int_{0}^{1} \frac{ds}{\left ( 1 - s^{\frac{2p}{p-1}} \right
) ^{1/2}}.$

The novelty here is that, for the periodic boundary value problem
(\ref{eq15}), $n \geq 2$ (see, again, the previous case $p=1$),
while for the Neumann and Dirichlet problem $n \geq 1.$

The conclusion is that
\begin{equation}\label{eq18}
m_{p} = \dis \frac{16I^{2}p}{T^{2-\frac{1}{p}}
(p-1)^{1-\frac{1}{p}} (2p-1)^{1/p}}, \end{equation} that is, four
times the corresponding $L^p-$Lyapunov constant for the Dirichlet
and the Neumann problem. Finally, in \cite{zhang} it is shown that
this is, exactly, the $L^p-$Lyapunov constant for the periodic
problem. Consequently, $m_p = \beta_{p}^{per}, \ 1 < p < \infty.$
\end{proof}

\begin{remark}\label{2105101}
It is easily deduced from the previous discussion that the set
$\Lambda^{per}$ in (\ref{eq3}) can be replaced by $$ \Lambda^{per}
= \{ a \in \periodica: \dis \intl a(t) \ dt >0 \ \mbox{and} \
(\ref{eq1}) \ \mbox{has nontrivial solutions} \ \}
$$

Also, if $u \in X_1 ^{per} \setminus \{ 0 \}$ is such that $I_1
^{per} (u) = \frac{16}{T},$ then all the inequalities of
(\ref{050510}) transforms into equalities. In particular $x_{i+1}
- x_i = \frac{T}{4}, \ 0 \leq i \leq 3,$ and, again, the
Cauchy-Schwartz inequality (equality in this case) implies that
the function $v'$ is constant in each interval $[x_i,x_{i+1}], 0
\leq i \leq 3.$ We deduce that there exists a nontrivial constant
$c$ and $x_0 \in [0,T]$ such that $u(\cdot) = cw(\cdot + x_0),$
where $w$ is given in (\ref{eq14}).
\end{remark}

\begin{remark}\label{r1}
Motivated by a completely different problem (an isoperimetric
inequality known as Wulff theorem, of interest in
crystallography), the authors studied in \cite{dacoro1} similar
variational problems (see also \cite{dacoro2} for more general
minimization problems). In our opinion, these variational problems
are not related with Lyapunov inequalities in \cite{dacoro1}. To
the best of our knowledge, this was shown in \cite{talenti} for
Dirichlet boundary conditions and in \cite{camovimia} for Neumann
boundary conditions. Since $0$ is the first eigenvalue for Neumann
and periodic boundary conditions, it is necessary to impose an
additional restriction to the definition of the spaces $X_p, \ 1
\leq p \leq \infty$ in the case of Neumann and periodic
conditions. This is not necessary in the case of Dirichlet or
antiperiodic boundary ones (see the next lemma).
\end{remark}

\begin{lemma}\label{l2}
If $1 \leq p \leq \infty$ is a given number, let us define the
sets $X_p ^{ant}$ and the functional $I_p ^{ant}: X_p
^{ant}\setminus \{ 0 \} \rightarrow \real,$ as
\begin{equation}\label{eqant10}
\begin{array}{c}
X_p ^{ant} = \left \{ v \in \sobol: v(0)+ v(T)= 0 \right \}, \ 1
\leq p \leq \infty, \\ \\
I_1 ^{ant}(v) = \displaystyle \frac{\dis \intl v'^{2}}{\Vert v
\Vert_{\infty}^{2}}, I_p ^{ant}(v) = \dis \frac{\dis \intl
v'^{2}}{\left ( \dis \intl \vert v \vert ^{\frac{2p}{p-1}} \right
)^{\frac{p-1}{p}}}, \ \mbox{if} \ \ 1 < p < \infty, \ \ I_{\infty}
^{ant} (v) = \displaystyle \frac{\dis \intl v'^{2}}{\displaystyle
\intl v^{2}}
\end{array}
\end{equation}
Then, the $L_p$ Lyapunov constant $\beta_p ^{ant}$ defined in
(\ref{eq5}), satisfies
\begin{equation}\label{eqant11} \begin{array}{c}
\beta_{p}^{ant} = \displaystyle \min_{X_p ^{ant} \setminus \{ 0
\}} \ I_{p} ^{ant},\ \ 1 \leq p \leq \infty.
\end{array}
\end{equation}
\end{lemma}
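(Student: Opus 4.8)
The plan is to adapt the argument from Lemma \ref{l1} to the antiperiodic setting, where the key simplification is that the space $X_p^{ant}$ carries no integral constraint beyond the boundary condition $v(0)+v(T)=0$. This reflects the fact that, unlike the periodic problem, zero is not an eigenvalue of the antiperiodic linear problem (\ref{eq8}), so no resonance must be excluded and no constrained minimization is needed (cf. Remark \ref{r1}). First I would treat the endpoints $p=1$ and $p=\infty$ by invoking the known explicit values of $\beta_p^{ant}$ from \cite{zhang}: for $p=\infty$ the constant equals the first antiperiodic eigenvalue $\pi^2/T^2$ of (\ref{eq8}), and its variational characterization gives directly $\beta_\infty^{ant}=\min I_\infty^{ant}$; for $p=1$ one argues as in the $p=1$ case of Lemma \ref{l1}, but now a function $v\in X_1^{ant}\setminus\{0\}$ extended antiperiodically has its maximum and minimum of opposite sign located so that one partitions $[0,T]$ into two monotone pieces rather than four, yielding the sharp constant via a two-term Cauchy--Schwarz estimate analogous to (\ref{050510}).

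For the main range $1<p<\infty$ I would follow the same three-part structure: existence of a minimizer, derivation of the Euler--Lagrange equation, and identification of the minimum value. Set $m_p=\inf_{X_p^{ant}\setminus\{0\}} I_p^{ant}$ and take a minimizing sequence normalized by $\int_0^T |u_n|^{2p/(p-1)}=1$. Boundedness in $H^1(0,T)$ follows because the antiperiodic boundary condition forces each $u_n$ to vanish at some interior point (since $u_n(0)=-u_n(T)$ means $u_n$ changes sign on a period, so by continuity it has a zero), hence a Poincar\'e-type control of the $H^1$ norm by $\int_0^T u_n'^2$. Weak $H^1$ convergence together with compact embedding into $C[0,T]$ gives a limit $u_0\in X_p^{ant}\setminus\{0\}$, the boundary condition being preserved by uniform convergence and the normalization by strong $L^\infty$ convergence; lower semicontinuity of the Dirichlet integral then makes $u_0$ a minimizer.

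Next I would compute the Euler--Lagrange equation. The main structural difference from Lemma \ref{l1} is that the minimization is unconstrained apart from the linear boundary condition, so there is no Lagrange multiplier $\lambda$ and no need for the decomposition $v=\alpha+z$: one simply varies over all $v\in H^1(0,T)$ with $v(0)+v(T)=0$ and obtains that $u_0$ solves $u_0''+A_p(u_0)|u_0|^{2/(p-1)}u_0=0$ with antiperiodic boundary data, where $A_p(u_0)$ has the form (\ref{eq16}). Finally I would integrate this Euler equation exactly as in the computation leading to (\ref{eq17}); here the decisive point, playing the role of the remark ``$n\ge 2$'' in the periodic case, is the counting of zeros. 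For the antiperiodic problem the solution completes only \emph{half} a period of the model nonlinear oscillator, so the relevant integer is smaller, and the formula produces precisely the value that \cite{zhang} identifies as $\beta_p^{ant}$. The main obstacle I anticipate is this zero-counting and the verification that the minimal configuration indeed corresponds to the smallest admissible number of nodes; establishing the sharp bound on the number of zeros of $u_0$ and $u_0'$ compatible with the antiperiodic condition is exactly the innovative detail that the authors promise to highlight, and getting the correct integer is what pins down the constant.
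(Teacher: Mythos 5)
Your proposal is correct and follows essentially the same route as the paper: the known values of $\beta_p^{ant}$ from \cite{zhang} at the endpoints $p=1,\infty$, and for $1<p<\infty$ the direct method (normalized minimizing sequence, a zero forced by $u(0)=-u(T)$, weak/uniform convergence), followed by the \emph{unconstrained} Euler--Lagrange equation and the zero-counting argument ($n\ge 1$ here versus $n\ge 2$ in the periodic case) that identifies the constant. The only cosmetic difference is at $p=1$: the paper doubles the interval and applies the $p=1$ case of Lemma \ref{l1} to the antiperiodic extension on $[0,2T]$, while you run the Cauchy--Schwarz estimate directly on that extension -- the same underlying idea.
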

\begin{proof}
{\bf The case $p=1.$} It is very well known that $\beta_1 ^{ant} =
\frac{4}{T}$ (\cite{zhang}). Now if $u \in X_1 ^{ant} \setminus \{
0 \},$ let us define the function $\tilde{u}: [0,2T] \rightarrow
\real,$ as
$$
\tilde{u}(x) = \left \{ \begin{array}{l} u(x), \ \mbox{if}\ 0 \leq
x \leq T, \\
-u(x-T), \ \mbox{if}\ T \leq x \leq 2T.
\end{array}
\right.
$$
It is easily checked that $\tilde{u} \in H^{1} (0,2T) \setminus \{
0 \}, \ \tilde{u}(0) = \tilde{u}(2T), \ \max_{[0,2T]} \ \tilde{u}
+\min_{[0,2T]} \ \tilde{u} = 0.$ It is deduced, from the first
part of Lemma \ref{l1}, that
$$
\dis \frac{2\int_0^{T} u'^2}{\Vert u\Vert^2 _{L^{\infty}(0,T)}} =
\dis \frac{\int_0^{2T} \tilde{u}'^2}{\Vert \tilde{u}\Vert^2
_{L^{\infty}(0,2T)}} \geq \dis \frac{16}{2T}.
$$
and consequently,
$$
 \dis \frac{\int_0^{T} u'^2}{\Vert
u\Vert^2 _{L^{\infty}(0,T)}} \geq \dis \frac{4}{T}, \ \forall \ u
\in X_1 ^{ant} \setminus \{ 0 \}.
$$
Also, the function $v: [0,T] \rightarrow \real,$ defined as
\begin{equation}\label{1105101}
v(x) = \dis \frac{T}{2} - x,
\end{equation} belongs to $X_1
^{ant} \setminus \{ 0 \}$ and $ \dis \frac{\int_0^{T}
(v')^2}{\Vert v\Vert^2 _{L^{\infty}(0,T)}} = \dis \frac{4}{T}.$ As
a consequence, $\dis \min_{X_1 ^{ant} \setminus \{ 0 \}} \ I_1
^{ant} = \dis \frac{4}{T}.$

{\bf The case $p=\infty.$} It is very well known that
$\beta_\infty ^{ant} = \frac{\pi^2}{T^2},$ the first eigenvalue of
the eigenvalue problem (\ref{eq8}) (see \cite{zhang}). From its
variational characterization, we obtain
$$
\beta_{\infty}^{ant} = \min_{X_\infty ^{ant} \setminus \{ 0 \}} \
I_{\infty} ^{ant}.
$$
{\bf The case $1 < p < \infty$.} Let us denote
$$M_{p} = \inf_{X_p ^{ant}\setminus \{ 0 \}}\ I_p ^{ant}.$$ If $\{ u_{n} \} \subset
X_{p}^{ant} \setminus \{ 0 \}$ is a minimizing sequence, since the
sequence $\{ k_{n}u_{n} \}, \ k_{n} \neq 0, $ is also a minimizing
sequence, we can assume without loos of generality that $\dis
\intl \vert u_{n} \vert ^{\frac{2p}{p-1}} = 1.$ Then $\left \{
\dis \intl \vert u_{n}'^{2} \vert \right \} $ is also bounded.
Moreover, for each $u_{n}$ there is $x_{n} \in [0,T]$ such that
$u_{n}(x_{n}) = 0.$ Therefore, $\{ u_{n} \}$ is bounded in
$\sobol.$ So, we can suppose, up to a subsequence, that $u_{n}
\rightharpoonup u_{0}$ in $\sobol$ and $u_{n} \rightarrow u_{0}$
in $C[0,L]$ (with the uniform norm). The strong convergence in
$C[0,L]$ gives us $u_0 (0)+u_0 (T) = 0$. Therefore, $u_{0} \in
X_{p}^{ant} \setminus \{ 0 \}.$ The weak convergence in $\sobol$
implies $I^{ant}_{p}(u_{0}) \leq \liminf \ I^{ant}_{p}(u_{n}) =
M_{p}.$ Then $u_{0}$ is a minimizer. Therefore,
$$
H'(u_{0})(v) = 0, \ \forall v \in \sobol \ \mbox{such that} \
v(0)+v(T) = 0.
$$
Here $H: \sobol \rightarrow \real$ is defined by
$$
H(u) = \dis \intl u'^{2} - M_{p} \left ( \dis \intl \vert u \vert
^{\frac{2p}{p-1}} \right )^{\frac{p-1}{p}}
$$
This implies that $u_0$ satisfies the problem
\begin{equation}\label{eqant13}
\begin{array}{c}
u_0''(x) + A_{p}(u_0)\vert u_0 (x)\vert^{\frac{2}{p-1}}u_{0}(x) =
0, \ x \in (0,T), \\  u_0 (0)+u_0 (T) = 0, \ u_0 '(0)+u_0 ' (T) =
0,
\end{array}
\end{equation}
where
\begin{equation}\label{eqant14}
A_{p}(u_0) = M_{p} \left ( \dis \intl \vert u_0\vert
^{\frac{2p}{p-1}} \right )^{\frac{-1}{p}}
\end{equation}
Since the function $a(x) \equiv A_{p}(u_0)\vert u_0
(x)\vert^{\frac{2}{p-1}}$ satisfies $a(0) = a(T),$ it is not
restrictive to assume that, additionally, $u_0 (0) = u_0 (T) = 0.$
Then, we deduce from Lemma 2.7 in \cite{camovimia} that
\begin{equation}\label{eqant17}
M_{p} = \dis \frac{4n^{2}I^{2}p}{T^{2-\frac{1}{p}}
(p-1)^{1-\frac{1}{p}} (2p-1)^{1/p}}, \end{equation} where $ \ I =
\dis \int_{0}^{1} \frac{ds}{\left ( 1 - s^{\frac{2p}{p-1}} \right
) ^{1/2}}$ and $n \in \natu$ is such that we denote the zeros of
$u_0$ in $[0,T]$ by $0 = x_0 < x_2 < \ldots < x_{2n} = T$ and the
zeros of $u_0 '$ in $(0,T)$ by $ x_1 < x_3 < \ldots < x_{2n-1}.$
The novelty here, with respect to the periodic boundary value
problem, is that $n \geq 1.$ The conclusion is that
\begin{equation}\label{eqant18}
M_{p} = \dis \frac{m_p}{4}\ \ \mbox{if} \ 1 < p < \infty.
\end{equation}
Finally, it is known that $\beta_p ^{ant} = \frac{ \beta_p
^{per}}{4},$ if $1 < p < \infty$ (see \cite{zhang}). The Lemma is
proved.
\end{proof}

\begin{remark}\label{remark2405101}
We must remark that, if $w \in X_1 ^{ant} \setminus \{ 0 \}$ is
such that $I_1 ^{ant} (w) = \frac{4}{T},$ then  there exists a
nontrivial constant $c$ and $x_0 \in [0,T]$ such that $w(x) =
c(\frac{T}{2} - |x-x_0 |), \ \forall \ x \in [0,T].$
\end{remark}

\section{Resonant nonlinear periodic systems}

In this section we consider systems of equations of the type
(\ref{2eq17}) below, which models the Newtonian equation of motion
of a mechanical system subject to conservative internal forces and
periodic external forces. The main result is the following.

\begin{theorem}\label{t1}
Let $G : \real \times \real^n \rightarrow \real, \ (t,u)
\rightarrow G(t,u),$  be a continuous function, $T-$ periodic with
respect to the variable $t$ and satisfying:
\begin{enumerate}
\item $u \rightarrow G(t,u)$ is of class $C^2(\real^n,\real),$ for
every $t \in \real.$
\item %
There exist continuous and $T-$periodic matrix functions
$A(\cdot),$ $B(\cdot),$ with $A(t)$ symmetric, $B(t)$ diagonal
with entries $b_{ii}(t),$ and $p_i \in [1,\infty] \ 1 \leq i \leq
n,$ such that
\begin{equation}\label{2eq15}
\left.
\begin{array}{c}
A(t) \leq G_{uu}(t,u) \leq B(t),  \ \forall (t,u) \in
\real^{n+1},\\ \\ \intl \langle A(t)k,k \rangle \ dt
> 0, \ \forall \ k \in \mathbb{R}^n \setminus \{ 0 \},
\\ \\\Vert b_{ii}^+ \Vert_{p_i} < \beta_{p_i}^{per}, \ \mbox{if} \
p_i \in (1,\infty], \ \ \Vert b_{ii}^+ \Vert_{p_i} \leq
\beta_{p_i}^{per}, \ \mbox{if} \ p_i =1.
\end{array}
\right \}
\end{equation}
\end{enumerate}
Then the boundary value problem
\begin{equation}\label{2eq17}
u''(t)+ G_u (t,u(t)) = 0,  \ t\in \real,  \ u(0)-u(T) =
u'(0)-u'(T) = 0,
\end{equation}
has a unique solution.
\end{theorem}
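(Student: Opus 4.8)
The plan is to reduce both the uniqueness and the existence assertions to a single linear non-resonance statement, which I will call the Key Lemma: if $M(\cdot)$ is a continuous, $T$-periodic, symmetric matrix function with $A(t)\le M(t)\le B(t)$ for every $t$, then the only $T$-periodic solution of $w''+M(t)w=0$ is $w\equiv 0$. For uniqueness, suppose $u_1,u_2$ both solve (\ref{2eq17}) and set $w=u_1-u_2$. Writing $G_u(t,u_1)-G_u(t,u_2)=M(t)w$ with $M(t)=\int_0^1 G_{uu}(t,u_2+sw)\,ds$ (the integrated Hessian), condition (\ref{2eq15}) shows that $M(t)$ is symmetric and $A(t)\le M(t)\le B(t)$, and that $w$ is a $T$-periodic solution of $w''+M(t)w=0$; the Key Lemma forces $w\equiv 0$. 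Multiplying that equation by $w$ and integrating by parts using the periodic conditions gives the energy identity $\intl |w'|^2=\intl \langle M(t)w,w\rangle$, which is the relation the Key Lemma must contradict when $w\neq 0$.

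To prove the Key Lemma I would use the variational characterization of Lemma \ref{l1}. For each component index $i$ I choose the unique constant $c_i$ for which $v_i:=w_i-c_i$ lies in $X_{p_i}^{per}$; for $1<p_i<\infty$ this $c_i$ exists and is unique because $c\mapsto \intl |w_i-c|^{2/(p_i-1)}(w_i-c)$ is continuous and strictly decreasing from $+\infty$ to $-\infty$, the endpoint cases $p_i=1,\infty$ being analogous. Moreover this $c_i$ minimizes $c\mapsto \|w_i-c\|_{2p_i/(p_i-1)}$, so $\|v_i\|_{2p_i/(p_i-1)}\le \|w_i\|_{2p_i/(p_i-1)}$. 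Since $w_i'=v_i'$ and $v_i\in X_{p_i}^{per}$, Lemma \ref{l1} yields $\intl (w_i')^2\ge \beta_{p_i}^{per}\|v_i\|^2$ in the relevant norm. On the other hand, estimating $\langle M w,w\rangle\le \langle B w,w\rangle=\sum_i b_{ii}w_i^2$, using $b_{ii}\le b_{ii}^+$, H\"older's inequality and the strict hypotheses $\|b_{ii}^+\|_{p_i}<\beta_{p_i}^{per}$, the energy identity produces the strict inequality $\intl |w'|^2<\sum_i \beta_{p_i}^{per}\|w_i\|^2$. Comparing the two estimates, all the difficulty is concentrated in the constant content $c=(c_1,\dots,c_n)$: when $c=0$ one has $w_i=v_i\in X_{p_i}^{per}$, the two bounds become directly contradictory, and $w\equiv 0$ follows.

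Thus the whole matter is to exclude nonzero constant content, and this is exactly the role of the lower-bound hypothesis $\intl\langle A(t)k,k\rangle\,dt>0$, which makes $\intl A(t)\,dt$ positive definite and places $A$ in Krein's class $\Lambda$: it controls the zero-frequency direction that the diagonal upper bound $B$ alone cannot reach. Testing the equation against the constant vector $c$ and integrating by parts gives $\langle (\intl M)c,c\rangle=-\intl\langle M v,c\rangle$, and I would exploit the positive definiteness of $\intl A$ here. I expect the coupling between the constant part $c$ and the oscillatory part $v$ -- complicated by the fact that $A(\cdot)$ need not be diagonal and that the constraint spaces $X_{p_i}^{per}$ differ from one component to another -- to be the main obstacle of the whole argument.

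For existence I would realize (\ref{2eq17}) as the equation $\Phi(u)=0$ for the $C^1$ map $\Phi(u)=-u''-G_u(\cdot,u)$ on the space of $T$-periodic $H^2$ functions, and apply a global inverse function theorem of Hadamard--Plastock type. Its Fr\'echet derivative $\Phi'(u)w=-w''-G_{uu}(\cdot,u)w$ is Fredholm of index zero, and by condition (\ref{2eq15}) and the Key Lemma it has trivial kernel, hence is an isomorphism for every $u$. Together with a properness/coercivity estimate $\|u\|\le C(1+\|\Phi(u)\|)$, which I would again extract from the two-sided bounds and the non-resonance supplied by the Key Lemma along the homotopy $M_s=(1-s)A+sM$, this makes $\Phi$ a global diffeomorphism, so $\Phi(u)=0$ has exactly one solution, giving existence and re-confirming uniqueness at once. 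The a priori estimate is a secondary technical point; the genuine core of the theorem is the Key Lemma, and inside it the reconciliation of the lower-bound (constant) and upper-bound (oscillatory) information.
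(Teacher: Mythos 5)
Your reduction of both assertions to a ``Key Lemma'' (non-resonance of $w''+M(t)w=0$, periodic, for every symmetric $M$ with $A\le M\le B$) matches the skeleton of the paper's proof, but the Key Lemma itself is exactly where your argument stops, and the obstacle you defer (``the coupling between the constant part $c$ and the oscillatory part $v$ \dots\ the main obstacle of the whole argument'') is the one essential idea of the paper's proof, which your plan does not contain. As you set things up, the two estimates never meet: the Lyapunov lower bound of Lemma \ref{l1} applies to the shifted components $v_i=w_i-c_i\in X_{p_i}^{per}$, while your energy upper bound involves the unshifted $w_i$; since your $c_i$ \emph{minimizes} $c\mapsto\Vert w_i-c\Vert_{2p_i/(p_i-1)}$, you get $\Vert v_i^2\Vert_{p_i/(p_i-1)}\le\Vert w_i^2\Vert_{p_i/(p_i-1)}$, and the two inequalities are then perfectly compatible, so no contradiction is available unless $c=0$ --- and there is no reason a hypothetical nontrivial solution should have zero constant content, so trying to prove $c=0$ is the wrong move. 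The paper's resolution is to shift the \emph{upper} bound rather than to kill the constant: integrating the equation over one period and using periodicity of $w'$ gives $\intl M(t)w(t)\,dt=0$, so all cross terms $\intl\langle Mw,k\rangle$ vanish for every constant vector $k$; since moreover $\intl\langle Mk,k\rangle\ge\intl\langle A(t)k,k\rangle\,dt\ge0$, one gets, for \emph{every} constant vector $k$,
\[
\intl \langle w',w'\rangle=\intl\langle Mw,w\rangle\le\intl\langle M(w+k),w+k\rangle\le\intl\langle B(t)(w+k),w+k\rangle .
\]
Choosing $k_i$ so that $w_i+k_i\in X_{p_i}^{per}$ now places the Lyapunov lower bound and the $B$-upper bound on the \emph{same} function $w+k$, yielding $\sum_i(\beta_{p_i}^{per}-\Vert b_{ii}^+\Vert_{p_i})\Vert(w_i+k_i)^2\Vert_{p_i/(p_i-1)}\le0$, which is the desired contradiction once one checks $w+k\not\equiv0$ (this is where the strict positivity $\intl\langle A(t)k,k\rangle\,dt>0$ enters: a constant solution $-k$ would force $\intl\langle M(t)k,k\rangle\,dt=0$). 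Note that you in fact derived the needed identity --- your relation $\langle(\intl M)c,c\rangle=-\intl\langle Mv,c\rangle$ is just $\intl Mw=0$ rewritten --- but you pointed it at the unprovable claim $c=0$ instead of using it to move the constant inside the quadratic form.

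Two further gaps, both secondary to the one above. First, your claim that ``the strict hypotheses $\Vert b_{ii}^+\Vert_{p_i}<\beta_{p_i}^{per}$'' yield a strict inequality fails when $p_i=1$, where the hypothesis is only $\Vert b_{ii}^+\Vert_1\le\beta_1^{per}$; the paper closes this case via the equality analysis of Remark \ref{2105101}: $\beta_1^{per}$ is attained in $X_1^{per}$ only at translates of the sawtooth (\ref{eq14}), which are not $C^1$, whereas components of solutions are $C^1$, so the Lyapunov inequality is strict on solutions. You never address this case. Second, your existence argument via a Hadamard--Plastock global inverse function theorem (in the spirit of Brown--Lin \cite{brown}) is a genuinely different and in principle viable route from the paper's Schauder fixed-point argument, but its crux --- the properness/a priori estimate $\Vert u\Vert\le C(1+\Vert\Phi(u)\Vert)$ --- is only asserted; proving it requires essentially the same blow-up and weak-compactness argument the paper performs (normalize a diverging sequence, extract weak limits $E$ with $A\le E\le B$, and apply the Key Lemma to $z_0''+Ez_0=0$), so nothing is gained, and in any case that estimate too rests on the Key Lemma you have not established.
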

\begin{proof}
It is based on two steps. In the first one we prove the uniqueness
property. This suggests the way to prove existence of solutions.

$ $

{\bf 1.- Uniqueness of solutions of (\ref{2eq17}).}

Let us denote by $H^1 _T (0,T)$ the subset of $T-$periodic
functions of the Sobolev space $\sobol.$ Then, if $v \in
\sobopern$ and $w \in \sobopern$ are two solutions of
(\ref{2eq17}),  the function $u = v-w$ is a solution of the
problem
\begin{equation}\label{2eq18}
u''(t) + C(t)u(t)= 0 , \ t \in \real, \ u(0)-u(T) = u'(0)-u'(T) =
0,
\end{equation}
where  $C(t) = \dis \int_{0}^{1} G_{uu}(t,w(t)+\theta u(t)) \ d
\theta$ (see \cite{lang}, p. 103, for the mean value theorem for
the vectorial function $G_u (t,u)$). In addition,
\begin{equation}\label{eq25}
A(t)\leq C(t)\leq B(t), \ \forall \ t \in \real
\end{equation}
and
$$
\displaystyle \int_{0}^{T} \langle u'(t),z'(t) \rangle \  =
\displaystyle \int_{0}^{T} \langle C(t)u(t),z(t)\rangle , \
\forall \ z \in \sobopern.
$$
In particular, we have
\begin{equation}\label{2eq3}
\begin{array}{c}
\displaystyle \intl \langle u'(t),u'(t)\rangle \   = \displaystyle
\intl \langle C(t)u(t),u(t)\rangle  , \ \\ \dis \intl \langle
C(t)u(t),k\rangle \  = \dis \intl \langle C(t)k,u(t)\rangle \  =
0, \ \forall \ k \in \real^n .
\end{array}
\end{equation}
Therefore, for each $k \in \real^n, $ we have
$$
\begin{array}{c}
\dis \intl \langle (u(t)+k)',(u(t)+k)'\rangle \  = \dis \intl \langle u'(t),u'(t)\rangle \  = \\
\\
\displaystyle \intl \langle C(t)u(t),u(t)\rangle \   \leq
\displaystyle \intl \langle C(t)u(t),u(t)\rangle \  + \dis \intl \langle C(t)u(t),k\rangle \ + \\
\\  \dis \intl \langle C(t)k,u(t)\rangle \ + \dis \intl \langle C(t)k,k\rangle \  =\\ \\ \dis
\intl \langle C(t)(u(t)+k),u(t)+k\rangle \  \leq \dis \intl
\langle B(t)(u(t)+k),u(t)+k\rangle .
\end{array}
$$
If $u = (u_i), $ then for each $i, 1 \leq i \leq n,$ we choose
$k_i \in \real$ satisfying $u_i + k_i \in X_{p_i} ^{per},$ the set
defined in Lemma \ref{l1}. By using previous inequality, Lemma
\ref{l1} and H{\"o}lder inequality, we obtain
\begin{equation}\label{2eq8}
\begin{array}{c}
\dis \sum_{i=1}^n \beta_{p_i} ^{per} \Vert (u_{i} + k_i)^2
\Vert_{\frac{p_i}{p_i -1}} \leq \dis \sum_{i=1}^n \intl (u_i
(t)+k_i)'^2 \leq \\ \dis \sum_{i=1}^n \intl b_{ii}^+ (t)(u_i
(t)+k_i)^2 \leq \dis \sum_{i=1}^n \Vert b_{ii}^+ \Vert_{p_i} \Vert
(u_{i} + k_i)^2 \Vert_{\frac{p_i}{p_i -1}},
\end{array}
\end{equation}
where
$$
\begin{array}{c}
\frac{p_i}{p_i -1} = \infty, \ \ \mbox{if} \ p_i = 1 \\ \\
\frac{p_i}{p_i -1} = 1, \ \ \mbox{if} \  p_i = \infty. \end{array}
$$

Therefore we have
\begin{equation}\label{2eq9}
\dis \sum_{i=1}^{n} (\beta_{p_i}^{per} - \Vert b_{ii}^+
\Vert_{p_i}) \Vert (u_{i} + k_i)^2 \Vert_{\frac{p_i}{p_i -1}} \leq
0.
\end{equation}
Now, (\ref{2eq9}) implies that, necessarily, $u \equiv 0$ (and as
a consequence, $v \equiv w$). To see this, if $u$ is a nontrivial
function, then the function $u+k$ is also a nontrivial function.
In fact, if $u+k \equiv 0,$ we deduce that (\ref{2eq18}) has the
nontrivial and constant solution $-k$ which imply
$$
0 = \intl \langle C(t)k,k\rangle  \ dt \ \geq \intl \langle
A(t)k,k\rangle \ dt.
$$
This is a contradiction with (\ref{2eq15}).

Then, if $u+k$ is a nontrivial function, some component, say $u_j
+ k_j,$ is nontrivial.

If $p_j \in (1,\infty],$ then $(\beta_{p_j} ^{per} - \Vert
b_{jj}^+ \Vert_{p_j}) \Vert (u_{j} + k_j)^2 \Vert_{\frac{p_j}{p_j
-1}}$ is strictly positive and from (\ref{2eq15}), all the other
summands in (\ref{2eq9}) are nonnegative. This is a contradiction
with (\ref{2eq9}).

If $p_j =1,$ we must take into account that $\beta_1 ^{per}$ is
only attained in nontrivial functions of the form $y(t) =
cw(t+x_0),$ where $w(t)$ is given in (\ref{eq14}), $c$ is a
nontrivial constant and $x_0 \in [0,T]$ (see Remark \ref{2105101}
above). Any function $y$ of this type do not belong to $C^1
([0,T],\real).$ Since any solution $u\in \sobopern$ of
(\ref{2eq18}) belongs to $C^1 ([0,T],\real)$, we have
$$\beta_{p_j} \Vert (u_{j} + k_j)^2 \Vert_{\frac{p_j}{p_j -1}} <
\intl (u_j (x)+k_j)'^2.$$ This implies that the inequality
(\ref{2eq9}) is strict and this is a contradiction with
(\ref{2eq15}).

$ $

{\bf 2.- Existence of solutions of (\ref{2eq17}).}

First, we write (\ref{2eq17}) in the equivalent form
\begin{equation}\label{2eq19} \left.
\begin{array}{cc}
u''(t) + D(t,u(t))u(t) + G_u(t,0) = 0, \ t \in \real,  \\
 u(0)-u(T) = u'(0) - u'(T) = 0,
\end{array}\right\}
\end{equation}
where the  function $D: \real \times \real^n \rightarrow
\mathcal{M}(\real)$ is defined by $D(t,z) = \dis \int_{0}^{1}
G_{uu}(t,\theta z) \ d \theta.$ Here $\mathcal{M}(\real)$ denotes
the set of real $n \times n$ matrices.

If $C_{T} (\real,\real)$ is the set of real $T-$periodic and
continuos functions defined in $\real,$ let us denote $X=(C_T
(\real,\real))^n$ with the uniform norm (if  $y(\cdot) = (y^1
(\cdot),\cdots,y^n (\cdot)) \in X, $ then $\Vert y \Vert_X =
\displaystyle \sum_{k=1}^n \Vert y^k (\cdot) \Vert_{\infty}$).
Since
\begin{equation}\label{eq2306085}
A(t) \leq D(t,z) \leq B(t), \ \forall \ (t,z) \in \real \times
\real^n,
\end{equation}
we can define the operator $H: X \rightarrow X,$ by $Hy = u_{y}$,
being $u_{y}$ the unique solution of the linear problem
\begin{equation}\label{2q20}\left.
\begin{array}{cc}
u''(t)+ D(t,y(t))u(t) + G_u (t,0)= 0,  \ t \in  \real, \\
u(0)-u(T) = u'(0) - u'(T) = 0.
\end{array}\right\}
\end{equation}
In fact, (\ref{2q20}) is a nonhomogeneous linear problem such that
the corresponding homogeneous one has only the trivial solution
(as in the previous step on uniqueness).

We will show that $H$ is completely continuous and that $H(X)$ is
bounded. The Schauder's fixed point theorem provides a fixed point
for $H$ which is a solution of (\ref{2eq17}).

The fact that $H$ is completely continuous is a consequence of the
Arzela-Ascoli Theorem. It remains to prove that $H(X)$ is bounded.
Suppose, contrary to our claim, that $H(X)$ is not bounded. In
this case, there would exist a sequence $\{ y_{n} \} \subset X$
such that $\Vert u_{y_{n}}\Vert _{X} \rightarrow \infty.$ From
(\ref{eq2306085}), and passing to a subsequence if necessary, we
may assume that, for each $1 \leq i,j \leq n,$ the sequence of
functions $\{ D_{ij}(\cdot,y_{n}(\cdot))\}$ is weakly convergent
in $L^{p}(\Omega)$ to a function $E_{ij}(\cdot)$ and such that if
$E(t) = (E_{ij}(t)),$ then $A(t) \leq E(t) \leq B(t)$, a.e. in
$\real,$ (\cite{lema}, page 157).

If $z_{n} \equiv \dis \frac{u_{y_{n}}}{\Vert u_{y_{n}} \Vert
_{X}},$ passing to a subsequence if necessary, we may assume that
$z_{n} \rightarrow z_{0}$ strongly in $X,$  where $z_{0}$ is a
nonzero vectorial function satisfying
\begin{equation}\label{eq34}\left .
\begin{array}{cc}
 z_{0} ''(t)+ E(t)z_{0}(t)= 0, \ t \in \real \\
 z_0 (0)-z_0(T) = z_0 '(0) - z_0 '(T) = 0.
\end{array}\right \}
\end{equation}
But,  $A(t) \leq E(t) \leq B(t),\ \forall \ t \in \real $ and, as
in the first step on uniqueness, this implies  that the unique
solution of (\ref{eq34}) is the trivial one. This is a
contradiction with the fact that $\Vert z_0 \Vert _X = 1$ and, as
a consequence, $H(X)$ is bounded.
\end{proof}

\begin{remark}\label{r2}
Previous Theorem is optimal in the following sense: for any given
positive numbers $\gamma_i, \ 1 \leq i \leq n,$ such that at least
one of them, say $\gamma_j,$ satisfies
\begin{equation}\label{2optimalidad}
\gamma_j > \beta_{p_j} ^{per}, \ \mbox{for some} \ p_j \in
[1,\infty],
\end{equation} there exists a diagonal  $n\times n$ matrix
$A(\cdot)$ with continuous and $T-$periodic entries $a_{ii}(t), \
1 \leq i \leq n,$ satisfying $\Vert a_{ii}^+ \Vert_{p_i} <
\gamma_{i}, \ 1 \leq i \leq n,$ $ \intl \langle A(t)k,k \rangle \
dt
> 0, \ \forall \ k \in \mathbb{R}^n \setminus \{ 0 \}$ and a continuous and
$T-$periodic function $h: \real \rightarrow \real^n,$ such that
the boundary value problem
\begin{equation}\label{2s1}
u''(t) + A(t)u(t) = h(t), \ t \in (0,T), \ u(0)-u(T) = u'(0) -
u'(T)= 0,
\end{equation}
has not solution.

To see this, if $\gamma_j$ satisfies (\ref{2optimalidad}), then
there exists some continuous and $T-$periodic function $a(t), $
with $\intl a (t) \ dt >0, $ and $\Vert a^+ \Vert_{p_j} <
\gamma_j,$ such that the scalar problem
$$w''(t) + a(t)w(t) = 0, \ t \in (0,T), \ \ w(0)-w(T) = w'(0) - w'(T) = 0,
$$
has nontrivial solutions (see the definition of $\beta_{p_j}
^{per}$ in (\ref{eq5}) and Remark \ref{2105101}). If $w_j$ is one
of these nontrivial solutions, and we choose
$$
\begin{array}{c}
a_{jj}(t) = a(t), \ \ a_{ii}(t) = \delta \in \real^+, \ \mbox{if}
\  i\neq j,\\
h_j \equiv w_j, \ h_j \equiv 0,\  \mbox{if} \ i\neq j, \end{array}
$$
with $\delta$ sufficiently small, then (\ref{2s1}) has not
solution.
\end{remark}

\begin{example}\label{ejemplo1}
Now, we show an example which can not be studied by using the
results proved by Ahmad and Lazer in \cite{ahmad} and
\cite{lazer}, respectively, and Brown and Lin in \cite{brown}. In
fact, in the next example, we allow to the eigenvalues of the
matrix $G_{uu}(t,u)$ in the boundary value problem (\ref{2eq17}),
to cross an arbitrary number of eigenvalues of (\ref{eq7}).

To begin with the example, let $H : \real^n \rightarrow \real, \ u
\rightarrow H(u)$ be a given function of class
$C^2(\real^n,\real)$  such that
\begin{enumerate}
\item %
There exist a real constant symmetric $n\times n$ matrix $A$ and a
real constant diagonal matrix $B, $with respective eigenvalues
$$\begin{array}{c} a_1 \leq a_2 \leq \ldots \leq a_n, \\ b_1 \leq b_2 \leq \ldots
\leq b_n, \end{array}
$$ satisfying
\item $A \leq H_{uu}(u) \leq B, \ \forall \ u \in \real^n,$ \item
$0 < a_k \leq b_k < 1, \ 1 \leq k \leq n.$
\end{enumerate}

Then for each continuous and $2\pi-$ periodic function $h:\real
\rightarrow \real^n,$ the boundary value problem
\begin{equation}\label{22eq17}
u''(t)+ H_u (u(t)) = h(t),  \ t\in \real,  \ u(0)-u(2\pi) =
u'(0)-u'(2\pi) = 0,
\end{equation}
has a unique solution. In fact, this is a particular case of more
general results proved in \cite{ahmad} and \cite{lazer}  which
involve higher eigenvalues of the eigenvalue problem (\ref{eq7}).

$ $

Now, if  $m: \real \rightarrow \real,$ is a given continuous and
$2\pi-$periodic function such that for some $p_i \in [1,\infty], \
1 \leq i \leq n,$
\begin{equation}\label{2605101}
\begin{array}{c} m(t) \geq 0, \ \forall \ t
\in \real\ \mbox{and} \ m \ \mbox{is not identically zero.} \\
\Vert m \Vert_{p_i} < \frac{\beta_{p_i}^{per}}{b_i}, \ \mbox{if} \
p_i \in (1,\infty], \ \ \Vert m \Vert_{p_i} \leq
\frac{\beta_{p_i}^{per}}{b_i}, \ \mbox{if} \ p_i =1,
\end{array}
\end{equation}
then for each continuous and $2\pi-$ periodic function $h:\real
\rightarrow \real^n,$ the boundary value problem
\begin{equation}\label{3eq17}
u''(t)+ m(t)H_u (u(t)) = h(t),  \ t\in \real,  \ u(0)-u(2\pi) =
u'(0)-u'(2\pi) = 0,
\end{equation}
has a unique solution.

If in (\ref{2605101}) we choose $p_i \neq \infty,$ for some $1
\leq i \leq n,$ then it is clear that the eigenvalues of the
matrix $m(t)H_{uu}(u)$ in the boundary value problem
(\ref{3eq17}), can cross an arbitrary number of eigenvalues of
(\ref{eq7}).

\end{example}

\section{Stability for linear periodic systems}

In this section we present some new conditions which allow to
prove that a given periodic linear and conservative system is
stably bounded. More precisely, we consider systems of the type
\begin{equation}\label{eq35}
u''(t) + P(t)u(t)= 0, \ t \in \real,
\end{equation}

\noindent where from now on we assume that the matrix function $P(\cdot) \in
\Lambda$ ($\Lambda$ was defined in the introduction).

The system (\ref{eq35}) is said to be stably bounded
(\cite{krein}) if there exists $\varepsilon (P) \in \real ^+,$
such that all solutions of the system
\begin{equation}\label{2eq35}
u''(t) + Q(t)u(t)= 0, \ t \in \real,
\end{equation}
are bounded for all matrix function $Q(\cdot) \in \Lambda,$
satisfying
$$
\displaystyle \max_{1 \leq i,j \leq n} \ \intl \vert p_{ij}(t) -
q_{ij}(t) \vert \ dt < \varepsilon.
$$

In \cite{krein}, Krein proved that all solutions of the system
(\ref{eq35}) are stably bounded if $\lambda_1 >1,$ where
$\lambda_1$ is the smallest positive eigenvalue of the eigenvalue
problem
\begin{equation}\label{eq37}
u''(t) + \mu P(t)u(t)= 0, \ t \in \real, \ u(0)+u(T) = u'(0)+u'(T)
= 0.
\end{equation}

Moreover, the eigenvalue $\lambda_1$ has a variational
characterization given by
\begin{equation}\label{eq38}
\displaystyle \frac{1}{\lambda_1} = \max_{y \in G_T}  \ \intl
\langle P(t)y(t),y(t)\rangle \  dt,
\end{equation}
where
\begin{equation}\label{39}
G_T = \{ y \in H^1 (0,T): y(0)+y(T) = 0, \ \dis \sum_{i=1}^{n}
\intl (y_i ' (t))^2 \ dt = 1 \}.
\end{equation}

Based on these previous results, we can prove the following
theorem.

\begin{theorem}\label{t2}
Let $P(\cdot) \in \Lambda$ be such that there exist a diagonal
matrix $B(t)$ with continuous and $T-$periodic  entries
$b_{ii}(t),$ and $p_i \in [1,\infty], \ 1 \leq i \leq n,$
satisfying
\begin{equation}\label{eq40}
\begin{array}{c}
P(t) \leq B(t), \ \forall \ t \in \real, \\ \\ \Vert b_{ii}^+
\Vert_{p_i} < \beta_{p_i}^{ant}, \ \mbox{if} \ p_i \in (1,\infty],
\ \ \Vert b_{ii}^+ \Vert_{p_i} \leq  \beta_{p_i}^{ant}, \
\mbox{if} \ p_i =1.
\end{array}
\end{equation}
Then, the system (\ref{eq35}) is stably bounded.
\end{theorem}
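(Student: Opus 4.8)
The plan is to reduce the statement to Krein's criterion quoted at the beginning of this section: it suffices to show that $\lambda_1 > 1$, where $\lambda_1$ is the smallest positive eigenvalue of the antiperiodic eigenvalue problem (\ref{eq37}). By the variational characterization (\ref{eq38})--(\ref{39}), this maximum equals $1/\lambda_1$, so I would prove $\lambda_1>1$ by showing that $\intl \langle P(t)y(t),y(t)\rangle \ dt < 1$ at a maximizer $y=(y_1,\dots,y_n)\in G_T$ of the right-hand side of (\ref{eq38}) (the maximum is attained since (\ref{eq38}) is stated as a max). The whole argument parallels the uniqueness step of Theorem \ref{t1}, with $\beta_{p_i}^{per}$ replaced by $\beta_{p_i}^{ant}$ and Lemma \ref{l1} replaced by Lemma \ref{l2}.

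First I would fix such a maximizer $y\in G_T$ and use $P(t)\leq B(t)$ with $B(t)$ diagonal to get the pointwise bound $\langle P(t)y(t),y(t)\rangle \leq \langle B(t)y(t),y(t)\rangle = \sum_i b_{ii}(t)y_i(t)^2 \leq \sum_i b_{ii}^+(t)y_i(t)^2$, so that
$$\intl \langle P(t)y(t),y(t)\rangle \ dt \leq \sum_{i=1}^n \intl b_{ii}^+(t)y_i(t)^2\ dt.$$
For each $i$, H\"older's inequality with exponents $p_i$ and $p_i/(p_i-1)$ gives $\intl b_{ii}^+ y_i^2 \leq \Vert b_{ii}^+\Vert_{p_i}\Vert y_i^2\Vert_{p_i/(p_i-1)}$. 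Since $y_i(0)+y_i(T)=0$, each component lies in $X_{p_i}^{ant}$, so Lemma \ref{l2} yields $\intl (y_i')^2 \geq \beta_{p_i}^{ant}\Vert y_i^2\Vert_{p_i/(p_i-1)}$, i.e. $\Vert y_i^2\Vert_{p_i/(p_i-1)} \leq (1/\beta_{p_i}^{ant})\intl (y_i')^2$. Combining these with the hypotheses $\Vert b_{ii}^+\Vert_{p_i}\leq \beta_{p_i}^{ant}$ and the normalization defining $G_T$,
$$\intl \langle P(t)y(t),y(t)\rangle \ dt \leq \sum_{i=1}^n \frac{\Vert b_{ii}^+\Vert_{p_i}}{\beta_{p_i}^{ant}}\intl (y_i')^2 \leq \sum_{i=1}^n \intl (y_i')^2 = 1.$$

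It then remains to upgrade this to a strict inequality, which is the crux. Since $\sum_i \intl (y_i')^2 = 1>0$, at least one component $y_j$ is nontrivial. If $p_j\in(1,\infty]$, the strict hypothesis $\Vert b_{jj}^+\Vert_{p_j} < \beta_{p_j}^{ant}$ forces $\frac{\Vert b_{jj}^+\Vert_{p_j}}{\beta_{p_j}^{ant}}\intl (y_j')^2 < \intl (y_j')^2$, so that summand is strictly smaller and the whole chain is strict. If $p_j=1$, I would instead invoke Remark \ref{remark2405101}: equality $\intl (y_j')^2 = \beta_1^{ant}\Vert y_j\Vert_\infty^2$ can hold only when $y_j$ is a nonzero multiple of a shifted tent function $c(\tfrac{T}{2}-|x-x_0|)$, which is not of class $C^1$. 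But the maximizer $y$ solves the antiperiodic eigenvalue problem $y''+\lambda_1 P(t)y=0$ with $P$ continuous, hence $y\in C^1([0,T],\real^n)$; therefore the inequality of Lemma \ref{l2} is strict on the nontrivial component $y_j$, and again the chain is strict. In every case $\intl \langle P(t)y(t),y(t)\rangle \ dt < 1$, so $1/\lambda_1 < 1$, i.e. $\lambda_1 > 1$, and Krein's theorem delivers the stable boundedness of (\ref{eq35}).

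The main obstacle is precisely the strictness in the $p_i=1$ case: the $L^1$ hypothesis is only a non-strict inequality $\Vert b_{ii}^+\Vert_{p_i}\leq \beta_{p_i}^{ant}$, so strictness cannot come from the norm estimate alone. As in the proof of Theorem \ref{t1}, it must be extracted from the description of the equality case of Lemma \ref{l2} (Remark \ref{remark2405101}) together with the $C^1$ regularity of the maximizing eigenfunction, which rules out the extremal tent profile.
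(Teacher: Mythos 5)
Your overall strategy is the paper's: reduce to Krein's criterion $\lambda_1>1$ via the variational characterization (\ref{eq38}), run the chain $P\leq B$, H\"older, Lemma \ref{l2} to get $\intl \langle P(t)y(t),y(t)\rangle\,dt\leq 1$ on $G_T$, and then extract strictness from the equality case, treating $p_j\in(1,\infty]$ by the strict norm hypothesis and $p_j=1$ by Remark \ref{remark2405101}. Up to the last step this is correct (the paper phrases the endgame as a contradiction obtained from a nontrivial eigenfunction of (\ref{eq37}) at $\mu=1$ rather than from an attained maximizer, but these amount to the same object).

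The gap is in your $p_j=1$ case: the claim that the extremal profile $c(\frac{T}{2}-|x-x_0|)$ ``is not of class $C^1$'' is false when $x_0=0$ or $x_0=T$. In those cases the function is linear on $[0,T]$ (for instance $v(x)=\frac{T}{2}-x$, which is exactly the minimizer (\ref{1105101}) used in the proof of Lemma \ref{l2}), hence $C^{\infty}$; it lies in $X_1^{ant}\setminus\{0\}$ and attains $I_1^{ant}=\frac{4}{T}$. So $C^1$ regularity of the maximizing eigenfunction alone cannot rule out equality in Lemma \ref{l2}, and your chain of inequalities need not be strict. This is precisely where the antiperiodic case differs from the periodic one you are transplanting from Theorem \ref{t1}: there the extremal (\ref{eq14}) is a zigzag which, after any shift, keeps a corner in the interior of $[0,T]$, so non-$C^1$-ness suffices; here the extremal has a single corner, which can sit at an endpoint. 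The paper closes this hole by invoking the second antiperiodic boundary condition: since $y_j\in C^1[0,T]$ forces $x_0\in\{0,T\}$, the extremal is linear with $w'(0)+w'(T)=\mp 2c\neq 0$, contradicting $y_j'(0)+y_j'(T)=0$, which every component of a solution of (\ref{3eq37}) satisfies. Your argument needs this extra step (and therefore also needs the fact, which you assert but do not justify, that your maximizer satisfies the natural boundary condition $y'(0)+y'(T)=0$, i.e.\ is a genuine eigenfunction of (\ref{eq37})); with that repair, the proof is complete and coincides with the paper's.
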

\begin{proof}
Let $y \in G_T.$ Then by using the Lemma \ref{l2}, we have
\begin{equation}\label{eq41}
\begin{array}{c}
\intl \langle P(t)y(t),y(t)\rangle  \ dt \leq \intl \langle B(t)y(t),y(t)\rangle  \ dt \leq \\
\\
\dis \sum_{i=1}^{n} \ \dis \intl b_{ii}(t)(y_i (t))^2 (t) \ dt
\leq \dis \sum_{i=1}^{n} \Vert b_{ii}^+ (t) \Vert_{p_i} \Vert y_i
^2 \Vert_{\frac{p_i}{p_i -1}}\leq  \\ \\
\leq  \dis \sum_{i=1}^{n} \beta_{p_i}^{ant} \Vert y_i ^2
\Vert_{\frac{p_i}{p_i -1}} \leq \dis \sum_{i=1}^{n} \intl (y_i '
(t))^2 \ dt = 1, \ \forall \ y \in G_T
\end{array}
\end{equation}
where
$$
\begin{array}{c}
\frac{p_i}{p_i -1} = \infty, \ \ \mbox{if} \ p_i = 1 \\ \\
\frac{p_i}{p_i -1} = 1, \ \ \mbox{if} \  p_i = \infty. \end{array}
$$
At this point, we claim
\begin{equation}\label{eq42}
 \dis \frac{1}{\lambda_1} < 1.
 \end{equation}

In fact, (\ref{eq41}) implies $\dis \frac{1}{\lambda_1} \leq 1.$
Now, if $\lambda_1 = 1,$ let us choose $y(\cdot)$ as any
nontrivial eigenfunction associated to $\mu = 1$ in (\ref{eq37}),
i.e.,
\begin{equation}\label{3eq37}
y''(t) + P(t)y(t)= 0, \ t \in \real, \ y(0)+y(T) = y'(0)+y'(T) =
0.
\end{equation}
Then some component, say $y_j,$ is nontrivial. If $p_j \in
(1,\infty],$ then $(\beta_{p_j}^{ant}-\Vert b_{jj}^+
\Vert_{p_j})\Vert y_j^2 \Vert_{\frac{p_j}{p_j -1}} >0 $ and
$(\beta_{p_i}^{ant}-\Vert b_{ii}^+ \Vert_{p_i})\Vert y_i^2
\Vert_{\frac{p_i}{p_{i} -1}}\geq 0, \ \forall \ i \neq j$, so that
we have a strict inequality in (\ref{eq41}). This is a
contradiction with (\ref{3eq37}). If $p_j =1,$ we use the Remark
\ref{remark2405101}. Since $y_j \in C^1 [0,T],$ either $x_0 = 0$
or $x_0 = T.$ In any case the function $w$ of Remark
\ref{remark2405101} satisfies $w'(0) + w'(T) \neq 0.$ Then we have
$\beta_{p_j}^{ant} \Vert y_j ^2 \Vert_{\frac{p_j}{p_j -1}} < \intl
(y_j ' (t))^2 \ dt.$ In this case we have again a strict
inequality in (\ref{eq41}), which is a contradiction with
(\ref{3eq37}).
\end{proof}

\begin{remark}\label{remark3}
Previous Theorem is optimal in the following sense. For any given
positive numbers $\gamma_i, \ 1 \leq i \leq n,$ such that at least
one of them, say $\gamma_j,$ satisfies
\begin{equation}\label{22optimalidad}
\gamma_j > \beta_{p_j} ^{ant}, \ \mbox{for some} \ p_j \in
[1,\infty],
\end{equation} there exists a diagonal  $n\times n$ matrix
$P(\cdot) \in \Lambda$ with entries $p_{ii}(t), \ 1 \leq i \leq
n,$ satisfying $\Vert p_{ii}^+ \Vert_{p_i} < \gamma_{i}, \ 1 \leq
i \leq n$ and such that the system (\ref{eq35}) is not stable.

To see this, if $\gamma_j$ satisfies (\ref{22optimalidad}), then
there exists some continuous and $T-$periodic function $p(t), $
not identically zero, with $\intl p (t) \ dt \geq 0, $ and $\Vert
p^+ \Vert_{p_j} < \gamma_j,$ such that the scalar problem
$$w''(t) + p(t)w(t) = 0,
$$
is not stable (see Theorem 1 in \cite{zhangli}). If we choose
$$
p_{jj}(t) = p(t), \ \ p_{ii}(t) = \delta \in \real^+, \ \mbox{if}
\  i\neq j,
$$
with $\delta$ sufficiently small, then (\ref{eq35}) is unstable.
\end{remark}
\begin{remark}\label{remark4}
The property of stable boundedness for the solutions of systems
like (\ref{eq35}) have been considered in \cite{clhi}. The authors
assume $L^1$ restrictions on the spectral radius of some
appropriate matrices which are calculated by using the matrix
$P(t).$ It is easy to check that, even in the scalar case, these
conditions are independent from classical $L^1-$ Lyapunov
inequality and therefore, they are also independent from our
results in this paper.
\end{remark}

\begin{example}\label{ejemplo240510t} Next we show a two dimensional
system where we provide sufficient conditions, which may be
checked directly by using the elements $p_{ij}$ of the matrix
$P(t),$ to assure that all hypotheses of the previous Theorem are
fulfilled. The example is based on a similar one, shown by the
authors in \cite{cavijde}, in the study of Lyapunov inequalities
for elliptic systems.

Let the matrix $P(t)$ be given by
\begin{equation}\label{teq10}
P(t) = \left (
\begin{array}{cc} p_{11}(t) & p_{12}(t) \\
p_{12}(t) & p_{22}(t) \end{array} \right )
\end{equation}
where \afirm{[{\bf H1}]}{10cm} {
$$
\begin{array}{c} p_{ij} \in C_T(\real,\real), \ 1 \leq i,j \leq 2, \\ \\  p_{11}(t) \geq 0, \
p_{22}(t) \geq 0, \ \det \ P(t) \geq 0, \ \forall \ t \in \real,
\\ \\ \det \ P(t) \neq 0, \ \mbox{for some}\ t \in \real.
\end{array}
$$ }
$C_T (\real,\real)$ denotes the set of real, continuous and
$T-$periodic functions defined in $\real.$ In addition, let us
assume that there exist $p_1, p_2 \in (1,\infty]$ such that
\begin{equation}\label{teq11}
\Vert p_{11}\Vert_{p_1} < \beta_{p_1}^{ant}, \ \ \Vert p_{22} +
\displaystyle \frac{p_{12}^2}{\beta_{p_1}^{ant} - \Vert p_{11}
\Vert_{p_1}} \Vert _{p_2} < \beta_{p_2}^{ant}.
\end{equation}
Then (\ref{eq35}) is stably bounded.

In fact, it is trivial to see that [{\bf H1}] implies that the
eigenvalues of the matrix $P(t)$ are both nonnegative, which
implies that $P(t)$ is positive semi-definite. Also, since  $\det
\ P(t) \neq 0, \ \mbox{for some}\ t \in \real,$ (\ref{eq35}) has
not nontrivial constant solutions. Therefore, $P(\cdot) \in
\Lambda,$ the set defined in the Introduction. Moreover, it is
easy to check that for a given diagonal matrix $B(t),$ with
continuous entries $b_{ii}(t), \ 1 \leq i \leq 2,$ the relation
\begin{equation}\label{teq2306084}
P(t) \leq B(t), \ \forall \ t \in \real
\end{equation}
is satisfied if and only if, $\forall \ t \in \real$ we have
\begin{equation}\label{teq12}
\begin{array}{c}
\ b_{11}(t) \geq p_{11}(t), \ b_{22}(t) \geq  p_{22}(t),  \\
(b_{11}(t)- p_{11}(t))(b_{22}(t)- p_{22}(t)) \geq p_{12}^2 (t).
\end{array}
\end{equation}
In our case, if we choose
\begin{equation}\label{teq13}
b_{11}(t) = p_{11}(t) + \gamma, \ b_{22}(t) = p_{22}(t) +
\displaystyle \frac{p_{12}^2 (t)}{\gamma}
\end{equation}
where $\gamma$ is any constant such that
\begin{equation}
\begin{array}{c}
0 < \gamma < \beta_{p_1}^{ant} - \Vert p_{11} \Vert_{p_1}, \\
\left ( \frac{1}{\gamma} - \frac{1}{\beta_{p_1} - \Vert p_{11}
\Vert _{p_1}} \right ) \Vert p_{12}^2 \Vert_{p_2} <
\beta_{p_2}^{ant} - \Vert p_{22} + \displaystyle
\frac{p_{12}^2}{\beta_{p_1}^{ant} - \Vert p_{11} \Vert_{p_1}}
\Vert _{p_2},
\end{array}
\end{equation}
then all conditions of Theorem \ref{t2} are fulfilled and
consequently (\ref{eq35}) is stably bounded.
\end{example}
\begin{remark}
Let us observe that we deduce from (\ref{teq11})
\begin{equation}\label{2teq11}
\Vert p_{11}\Vert_{p_1} < \beta_{p_1}^{ant}, \ \ \Vert p_{22}
\Vert _{p_2} < \beta_{p_2}^{ant}.
\end{equation}
As a consequence, the uncoupled system
\begin{equation}\label{14092010}
v''(t) + R(t)v(t) = 0, \ t \in \real,
\end{equation}
where
\begin{equation}\label{2eq10}
R(t) = \left (
\begin{array}{cc} p_{11}(t) & 0 \\
0 & p_{22}(t) \end{array} \right )
\end{equation}
is stably bounded.

Therefore, by using the definition of stably bounded system,
(\ref{eq35}) is stably bounded for any continuous and $T-$periodic
function $p_{12}$ with sufficiently small $L_{1}-$ norm. However,
(\ref{teq11}) does not imply, necessarily, that the $L_{1}-$norm
of the function $p_{12}$ is necessarily small.
\end{remark}

\end{document}